\numberwithin{equation}{section}
\newtheorem{prop}{Proposition}[section]
\newtheorem{theo}{Theorem}[section]
\def\begeq{\begin{equation}}
\def\endeq{\end{equation}}
\def\lf{\left}
\def\ri{\right}
\begin{document}

\title{Rotationally symmetric harmonic diffeomorphisms between surfaces}
\author{Li Chen*, Shi-Zhong Du$^\dagger $\&   Xu-Qian Fan}
\thanks{*~Research partially supported by the National Natural Science
Foundation of China (11201131).\\
$^\dagger $Research partially supported by the National
Natural Science Foundation of China (11101106).
}
\address{Faculty of mathematics \& computer science, Hubei University, Wuhan, 430062, P. R. China.}
\email{chernli@163.com}

\address{The School of Natural Sciences and Humanities,
            Shenzhen Graduate School, The Harbin Institute of Technology, Shenzhen, 518055, P. R. China.}
\email{szdu@hitsz.edu.cn}

\address{Department of
Mathematics, Jinan University, Guangzhou,
 510632,
P. R. China.}
\email{txqfan@jnu.edu.cn}

\renewcommand{\subjclassname}{%
  \textup{2000} Mathematics Subject Classification}
\subjclass[2000]{Primary 58E20; Secondary 34B15}
\date{Feb. 2013}
\keywords{Harmonic map, rotational symmetry, hyperbolic space.}

\begin{abstract}
In this paper, we show that the nonexistence of rotationally symmetric harmonic diffeomorphism between the unit disk without the origin and a punctured disc with hyperbolic metric on the target.
\end{abstract}
\maketitle\markboth{Li Chen, Shi-Zhong Du$\&$ Xu-Qian Fan}{Rotationally symmetric harmonic diffeomorphisms}

\section{Introduction}
The existence of harmonic diffeomorphisms between complete Riemannian manifolds has been extensively studied, please see for example \cite{ak}-\cite{w}. In particular, Heinz \cite{hz} proved that there is no harmonic diffeomorphism from the unit disc onto $\mathbb{C}$ with its flat metric. On the other hand, Schoen \cite{rs} mentioned a question about
the existence, or nonexistence, of a harmonic diffeomorphism from the complex plane onto the
hyperbolic 2-space. At the present time, many beautiful results about the asymptotic behavior of harmonic embedding from $\mathbb{C}$ into the hyperbolic plane have been obtained, please see for example \cite{wt,httw,atw,aw}, or the review \cite{wt2} by Wan and the references therein. In 2010, Collin and Rosenberg \cite{cr} constructed harmonic diffeomorphisms from $\mathbb{C}$ onto the hyperbolic plane. In \cite{ta,ta2,rr,cl}, the authors therein studied the rotational symmetry case. One of their results is the nonexistence of rotationally symmetric harmonic diffeomorphism from $\mathbb{C}$ onto the hyperbolic plane.

In this paper, we will study the existence, or nonexistence, of rotationally symmetric harmonic diffeomorphisms from the unit disk without the origin onto a punctured disc. For simplicity, let us denote
$$\mathbb{D}^*=\mathbb{D}\setminus\{0\} \textrm{ and } P(a)=\mathbb{D}\setminus\{|z|\leq e^{-a}\} \textrm{ for } a>0,$$
here $\mathbb{D}$ is the unit disc, and $z$ is the complex coordinate of $\mathbb{C}$. We will prove the following results.
\begin{theo}\label{thm1}
For any $a>0$, there is no rotationally symmetric harmonic diffeomorphism from $\mathbb{D}^*$ onto $P(a)$ with its hyperbolic metric.
\end{theo}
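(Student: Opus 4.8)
The plan is to reduce the problem to an ODE analysis. A rotationally symmetric map from $\mathbb{D}^*$ to $P(a)$ should have the form $z \mapsto \rho(r)e^{i\theta}$ where $z = re^{i\theta}$; writing the target punctured disc $P(a) = \{e^{-a} < |w| < 1\}$ with its hyperbolic (complete, constant curvature $-1$) metric, whose density near the inner boundary $|w| = e^{-a}$ degenerates appropriately, one computes that harmonicity becomes a second-order nonlinear ODE for $\rho$ (or better, for $u = \log\rho$ or a similar change of variables) in the radial variable $t = \log r \in (-\infty, 0)$. I would first carefully derive this ODE, using the standard fact that for a harmonic map into a surface with metric $\sigma^2(w)|dw|^2$ the tension field equation in the rotationally symmetric ansatz reduces to something like $\rho_{tt} = (\partial \log \sigma^2 / \partial \log|w|)\,(\rho_t^2 - \rho^2)$ or a comparable expression — I want to pin down the precise form, because the hyperbolic metric on the annulus $P(a)$ is explicit (it comes from the covering by a strip, so $\sigma(w) = \frac{\pi/a}{|w|\sin((\pi/a)\log|w| + \text{const})}$ or similar), making the coefficient explicit.

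Next I would set up the boundary/asymptotic conditions a diffeomorphism must satisfy. Since the map is an orientation-preserving diffeomorphism from $\mathbb{D}^*$ onto $P(a)$, as $r \to 1^-$ we need $\rho(r) \to 1$, and as $r \to 0^+$ (i.e. $t \to -\infty$) we need $\rho(r) \to e^{-a}$, with $\rho$ monotone and $\rho' > 0$ throughout. The strategy is then to show these conditions are incompatible with the ODE. I expect the key is a monotonicity or convexity argument: one extracts from the ODE a conserved or monotone quantity (an "energy"-type first integral, or the behavior of $\rho_t$), and shows that the sign of the right-hand side forces $\rho_t$ to blow up, or forces $\rho$ to overshoot one of the limiting values, or prevents $\rho$ from attaining the correct limit at one of the two ends. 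Concretely, I would analyze the autonomous-like structure near $t \to -\infty$: linearize near the would-be limit $\rho = e^{-a}$ and check whether trajectories can actually converge there while staying in the admissible range, and separately examine the behavior near $t = 0$.

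The main obstacle I anticipate is handling the degeneracy of the hyperbolic metric at the inner boundary $|w| = e^{-a}$: the density $\sigma$ blows up there, so the coefficient in the ODE is singular in the limit $\rho \to e^{-a}$, and one must control the map's behavior in this singular regime rather than just formally linearizing. A clean way around this is to change coordinates on the target so that the cusp/boundary becomes a standard model (a half-cylinder with the metric $ds^2 + \sinh^{-2}\!s\,$-type density, or after another substitution an explicit flat-looking model), turning the statement into: no solution of the resulting ODE interpolates between the two prescribed ends. I would look for an auxiliary function — something like $\phi = \rho_t/\rho$ or $\rho_t^2 - (\text{metric term})$ — that satisfies a differential inequality of definite sign (a maximum-principle / Gronwall argument on the half-line), yielding the contradiction. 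Establishing that this auxiliary quantity has the right monotonicity, uniformly up to the singular end, is the technical heart of the argument; the rest (deriving the ODE, recording the boundary conditions) is routine.
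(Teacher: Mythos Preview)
Your setup is correct and in fact coincides with the paper's: the hyperbolic density on $P(a)$ is exactly the one you wrote, the ansatz $z\mapsto\rho(r)e^{i\theta}$ is the right one, and after passing to $t=\log r$ and $u=\log\rho$ the harmonic map equation becomes the autonomous second-order ODE
\[
u_{tt}=\frac{\pi}{a}\cot\!\Bigl(\frac{\pi}{a}u\Bigr)\bigl(u_t^2-1\bigr),\qquad t\in(-\infty,0),\ u\in(-a,0),
\]
with $u_t>0$, $u\to 0$ as $t\to 0^-$ and $u\to -a$ as $t\to -\infty$. Your suggested auxiliary $\phi=\rho_t/\rho=u_t$ is also exactly the right object.

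Where your proposal falls short is at the decisive step. You anticipate having to use soft tools (a monotone quantity, a differential inequality, Gronwall, a maximum principle) and you flag the singular behaviour as $u\to -a$ as the main obstacle. In fact neither the soft tools nor any delicate asymptotic analysis are needed, and this is what the paper exploits: the equation above has an \emph{exact} first integral. Writing $\phi\,d\phi/du=(\pi/a)\cot(\pi u/a)(\phi^2-1)$ and separating variables gives
\[
\phi^2-1=c_0\sin^2\!\Bigl(\frac{\pi}{a}u\Bigr)
\]
for some constant $c_0$ (the paper reaches the same identity by inverting to $r=r(F)$ and recognising a Bernoulli equation). Hence $\phi=u_t$ is bounded above on $[-a,0]$, so
\[
|t|=\int_{u}^{0}\frac{ds}{\phi(s)} \le \frac{a}{\inf\phi} <\infty,
\]
contradicting $t\to-\infty$ as $u\to -a$. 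The ``singular regime'' you worried about is harmless precisely because $\sin(\pi u/a)\to 0$ there, forcing $\phi\to 1$. So the missing idea in your plan is not a subtle estimate but the observation that the ODE integrates in closed form; once you see that, the contradiction is a one-line comparison of a bounded integral against the infinite $t$-interval.
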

And vice versa, that is:
\begin{theo}\label{thm2}
For any $a>0$, there is no rotationally symmetric harmonic diffeomorphism from $P(a)$ onto $\mathbb{D}^*$ with its hyperbolic metric.
\end{theo}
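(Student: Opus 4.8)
The plan is to reduce the problem to a single ODE for the radial part of the map and then to show that every solution has bounded image, which is incompatible with surjectivity onto the puncture of $\mathbb{D}^*$.

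First I would fix convenient conformal models. Since $P(a)$ is an annulus of modulus $a/2\pi$, the substitution $z=e^{-t+i\theta}$ identifies it conformally with the flat cylinder $\{0<t<a\}\times S^1$, where $S^1=\mathbb{R}/2\pi\mathbb{Z}$; as the harmonicity of a map out of a surface depends only on the conformal class of the domain metric, I may work with $dt^2+d\theta^2$ on the domain. On the target, $w=e^{-v+i\phi}$ with $v>0$ and $\phi\in S^1$ identifies $\mathbb{D}^*$ with $\{v>0\}\times S^1$, and the complete hyperbolic metric $\frac{|dw|^2}{|w|^2(\log|w|)^2}$ becomes $v^{-2}(dv^2+d\phi^2)$; here $v\to+\infty$ is the puncture and $v\to 0^+$ is the circle $|w|=1$. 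Rotational symmetry — equivariance of $f$ under the rotation $S^1$–actions, up to composing with the target isometry $\phi\mapsto-\phi$ — forces $f(t,\theta)=(v(t),\theta+\psi(t))$ with $v\colon(0,a)\to(0,\infty)$ a diffeomorphism; in particular $v$ is strictly monotone and $\sup_{(0,a)}v=+\infty$.

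Next I would derive the harmonic map equations for this ansatz. Computing the tension field with the Christoffel symbols of $v^{-2}(dv^2+d\phi^2)$ gives
\begin{equation*}
v\,v''-(v')^2+(\psi')^2+1=0,\qquad \psi''-\frac{2v'}{v}\psi'=0 .
\end{equation*}
The second equation integrates immediately to $\psi'=b\,v^2$ for a constant $b$. Feeding this into the first and setting $y=\log v$ turns it into $y''+b^2e^{2y}+e^{-2y}=0$; multiplying by $y'$ and integrating yields the first integral
\begin{equation*}
(v')^2=1+Cv^2-b^2v^4
\end{equation*}
for a constant $C$ (when $\psi$ is constant, $b=0$, this reduces to $(v')^2=1+Cv^2$, obtainable directly from $vv''-(v')^2+1=0$).

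Finally I would derive a contradiction with $\sup v=+\infty$. If $b\neq0$, or if $b=0$ and $C\le0$, the right-hand side of the first integral is bounded above wherever it is nonnegative, forcing $v$ to be bounded on $(0,a)$. The only remaining case is $b=0$, $C=k^2>0$: then $v''=k^2v$, so $v=Ae^{kt}+Be^{-kt}$, and the constraint $(v')^2=1+k^2v^2$ forces $AB=-1/4k^2<0$, i.e. $v(t)=\pm\frac1k\sinh\!\big(k(t-t_0)\big)$ for some real $t_0$; positivity and smoothness on $(0,a)$ require $t_0\notin(0,a)$, whence $v$ extends continuously to $[0,a]$ and $\sup_{(0,a)}v=\max\{v(0^+),v(a^-)\}<+\infty$. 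In every case $v$ is bounded, contradicting surjectivity, so no such diffeomorphism exists. I expect this last case to be the only delicate point: the algebraic first integral does not by itself bound $v$ when $C>0$, and one must invoke that the explicit hyperbolic–sine solution is being restricted to the bounded interval $(0,a)$; the rest is routine.
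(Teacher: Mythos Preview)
Your argument is correct and is, at its core, the same first--integral reduction the paper uses, but carried out in different coordinates and with a broader ansatz. The paper works in polar coordinates, assumes the specific form $\psi=g(r)e^{i\theta}$ (i.e.\ your $b=0$), passes to $G=\ln g$, inverts to view $r$ as a function of $G$, and solves the resulting Bernoulli equation to obtain $(\ln r)'(G)=(1+c_1G^2)^{-1/2}$; it then integrates explicitly to $\ln r=\tfrac{1}{\sqrt{c_1}}\,\mathrm{arcsinh}(\sqrt{c_1}\,G)$ and reads off $r\to 0$ as $G\to-\infty$, contradicting $r\ge e^{-a}$. Your cylinder coordinates turn this into the equivalent statement that the inverse relation $v(t)=\tfrac1k\sinh(k(t-t_0))$ stays bounded on the finite interval $(0,a)$; the two conclusions are the same fact viewed from opposite sides of the $\mathrm{arcsinh}/\sinh$ pair.

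What you do differently, and what it buys: (i) passing to the flat cylinder on the domain and to $v^{-2}(dv^2+d\phi^2)$ on the target makes the tension--field system and its energy integral $(v')^2=1+Cv^2-b^2v^4$ appear with almost no computation; (ii) you allow the angular twist $\psi(t)$, hence cover the full class of $S^1$--equivariant maps rather than only those of the form $g(r)e^{i\theta}$, and the extra term $-b^2v^4$ only helps (it makes $v$ bounded immediately when $b\neq0$). The paper's route, in exchange, stays closer to the harmonic--map PDE in the original chart and avoids computing Christoffel symbols. Your identification of the $b=0$, $C>0$ case as the only delicate one is exactly right, and your treatment of it via the explicit $\sinh$ solution on a bounded interval is clean and matches the paper's explicit integration.
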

We will also consider the Euclidean case, and will prove the following theorem.
\begin{theo}\label{thm3}
For any $a>0$, there is no rotationally symmetric harmonic diffeomorphism from $\mathbb{D}^*$ onto $P(a)$ with its Euclidean metric; but on the other hand, there are rotationally symmetric harmonic diffeomorphisms from $P(a)$ onto $\mathbb{D}^*$ with its Euclidean metric.
\end{theo}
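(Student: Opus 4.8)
The plan is to reduce both assertions to an elementary analysis of an ordinary differential equation. Write the complex coordinate on the domain as $z=re^{i\theta}$ and the one on the target as $w$. Since the harmonicity of a map out of a surface depends only on the conformal class of the domain metric, I may regard $\mathbb{D}^*$ and $P(a)$ as flat annular domains; and a rotationally symmetric diffeomorphism onto a disc-type domain must, after composing with a rotation of the target and possibly the reflection $\theta\mapsto-\theta$, have the form $w=f(r)e^{i\theta}$ for a monotone function $f>0$ of $r$ (if one does not wish to assume the angular part is exactly $\theta$, one lets $f$ be complex-valued; the ODE below is unchanged). Because the target metric is Euclidean, such a $w$ is harmonic if and only if its Euclidean coordinate functions $f(r)\cos\theta$ and $f(r)\sin\theta$ are harmonic on the domain, and applying $\Delta=\partial_{rr}+r^{-1}\partial_r+r^{-2}\partial_{\theta\theta}$ this is equivalent to the single Euler equation $f''+\tfrac1r f'-\tfrac1{r^2}f=0$, whose general solution is $f(r)=Ar+Br^{-1}$.

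For the nonexistence part, suppose $w=f(r)e^{i\theta}$ were a diffeomorphism of $\mathbb{D}^*=\{0<r<1\}$ onto $P(a)$. Since $f$ is a continuous monotone bijection of $(0,1)$ onto the interval of radii $(e^{-a},1)$, the limit $\lim_{r\to0^+}f(r)$ exists and equals one of the two boundary radii $e^{-a}$ or $1$; in particular it is finite and nonzero. But the only solutions $f(r)=Ar+Br^{-1}$ of the Euler equation either blow up as $r\to0^+$ (when $B\neq0$) or tend to $0$ (when $B=0$), so no such $f$ exists. This is precisely where the Euclidean case is easier than the hyperbolic Theorems~\ref{thm1} and~\ref{thm2}: with a non-flat target metric the reduced ODE is nonlinear and this short argument is unavailable.

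For the existence part, I would simply display the map. Taking the domain to be $P(a)=\{e^{-a}<r<1\}$, the target $\mathbb{D}^*$ with the Euclidean metric, and
\[
A=\frac1{1-e^{-2a}},\qquad B=-\frac{e^{-2a}}{1-e^{-2a}},\qquad f(r)=Ar+\frac{B}{r},
\]
one checks $f(e^{-a})=0$, $f(1)=1$, and $f'(r)=A+|B|r^{-2}>0$ on $(e^{-a},1)$, so $f$ is an increasing bijection of $(e^{-a},1)$ onto $(0,1)$. Hence $w=f(r)e^{i\theta}$ is a smooth map of $P(a)$ onto $\mathbb{D}^*$ whose Jacobian in polar coordinates equals $f(r)f'(r)/r$, which is nowhere zero; it is therefore a rotationally symmetric diffeomorphism, and it is harmonic for the Euclidean target metric because $f$ solves the Euler equation.

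The argument is essentially routine, and the only point requiring care is to make the case distinction in the nonexistence half genuinely exhaustive: one must check that both choices of orientation and both ways of pairing the two ends of $\mathbb{D}^*$ with the two boundary circles of $P(a)$ lead to the same requirement, namely that $\lim_{r\to0^+}(Ar+Br^{-1})$ be a finite nonzero number, which it never is. I do not anticipate any substantive obstacle.
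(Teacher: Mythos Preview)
Your argument is correct and follows the same overall reduction as the paper: a rotationally symmetric map $f(r)e^{i\theta}$ into a Euclidean target is harmonic iff $f''+\tfrac1r f'-\tfrac1{r^2}f=0$, and one then analyses the solutions on $(0,1)$ for nonexistence and on $(e^{-a},1)$ for existence. The difference lies in how the ODE is handled. You recognise it immediately as an Euler equation with general solution $f(r)=Ar+Br^{-1}$, from which the nonexistence half is a one-line observation about $\lim_{r\to0^+}f(r)$ and the existence half is a direct choice of $A,B$. The paper instead substitutes $H=(\ln h)'$, solves the resulting Riccati equation for $H$, integrates back to $h=|1+c_3r^2|\,r^{-1}e^{c_4}$, and only then reads off the contradiction and the explicit diffeomorphism $q=(e^{2a}r^2-1)\,r^{-1}(e^{2a}-1)^{-1}$; this is of course the same function you write down. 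Your route is shorter and more transparent, at the cost of being specific to the Euclidean target; the paper's substitution parallels the method used in the hyperbolic Theorems~\ref{thm1} and~\ref{thm2}, where the reduced ODE is genuinely nonlinear and no closed-form basis of solutions is available.
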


This paper is organized as follows. In section 2, we will prove
Theorem \ref{thm1} and Theorem \ref{thm2}. Theorem \ref{thm3} will be proved in section 3. At the last section, we will give an alternative proof for the nonexistence of rotationally symmetric harmonic diffeomorphism from $\mathbb{C}$ onto the hyperbolic disc.

\section*{Acknowledgments}
The author(XQ) would like to thank Prof. Luen-fai Tam for his very worthy advice.

\section{Harmonic maps from $\mathbb{D}^*$ to $P(a)$ with its hyperbolic metric and vice versa}
For convenience, let us recall the definition about the harmonic maps between surfaces. Let $M$ and $N$ be two oriented surfaces with metrics $\tau^2|dz|^2$ and $\sigma^2|du|^2$ respectively, where $z$ and $u$ are local complex coordinates of $M$ and $N$ respectively. A $C^2$ map $u$ from $M$ to $N$ is harmonic if and only if $u$ satisfies
\begin{equation} \label{eqdef}
u_{z\bar{z}}+\frac{2\sigma_u}{\sigma}u_zu_{\bar{z}}=0.
\end{equation}

Now let us prove Thoerem \ref{thm1}.
\begin{proof}[Proof of Theorem \ref{thm1}]
First of all, let us denote $(r,\theta)$ as the polar coordinates of $\mathbb{D}^*$, and $u$ as the complex coordinates of $P(a)$ in $\mathbb{C}$, then the hyperbolic metric $\sigma_1 d|u|$ on $P(a)$ can be written as
\begin{equation} \label{eqmetric1}
\frac{-\pi |du|}{a|u|\sin\lf(\frac{\pi}{a}\ln |u|\ri)}
\end{equation}
here $|u|$ is the norm of $u$ with respect to the Euclidean metric.

We will prove this theorem by contradiction. Suppose $u$ is a  rotationally symmetric harmonic diffeomorphism from $\mathbb{D}^*$ onto $P(a)$ with the metric $\sigma_1 d|u|$. Because $\mathbb{D}^*$ , $P(a)$ and the metric $\sigma_1 d|u|$ are rotationally symmetric, we can assume that such a map $u$ has the form $u=f(r)e^{i\theta}.$  Substituting $u,\ \sigma_1$ to \eqref{eqdef},  we can get
\begin{equation}\label{hdeq1}
f''+\frac{f'}{r}-\frac{f}{r^2}-\frac{\sin\lf(\frac{\pi}{a}\ln f\ri)+\frac{\pi}{a}\cos\lf(\frac{\pi}{a}\ln f\ri)}{f\sin\lf(\frac{\pi}{a}\ln f\ri)}\lf((f')^2-\frac{f^2}{r^2}\ri)=0
\end{equation}
 for  $1>r>0$. Since $u$ is a  harmonic diffeomorphism from $\mathbb{D}^*$ onto $P(a)$, we have
\begin{equation}\label{hdeq2}
 f(0)=e^{-a},\ f(1)=1 \textrm{ and } f'(r)>0 \textrm{ for  } 1>r>0,
\end{equation}
or
$$f(0)=1,\ f(1)=e^{-a} \textrm{ and } f'(r)<0 \textrm{ for  } 1>r>0.$$

We will just deal with the case that \eqref{hdeq2} is satisfied, the rest case is similar.
Let $F=\ln f\in (-a,0)$, then we have
$$F'=\frac{f'}{f}>0,\ F''=\frac{f''}{f}-\lf(\frac{f'}{f}\ri)^2.$$
Using this fact, we can get from \eqref{hdeq1} the following equation.
\begin{equation}\label{hdeqF1}
 F''+\frac{1}{r}F'-\frac{\pi}{a}\textrm{ctg}\lf(\frac{\pi}{a}F\ri)\lf(F'\ri)^2
 +\frac{1}{r^2}\frac{\pi}{a}\textrm{ctg}\lf(\frac{\pi}{a}F\ri) =0\textrm{ for  } 1>r>0
\end{equation}
with $F(0)=-a,\ F(1)=0$ and $F'(r)>0$   for  $1>r>0$.

Regarding $r$ as a function of $F$, we have the following relations.
\begin{equation}\label{hdeqFp}
F_r=r_F^{-1},\ F_{rr}=-r_F^{-3}r_{FF}.
\end{equation}
Using these facts, we can get from \eqref{hdeqF1} the following equation.
\begin{equation}\label{hdeqF2}
 \frac{r''}{r}-\lf(\frac{r'}{r}\ri)^2
 +\frac{\pi}{a}\textrm{ctg}\lf(\frac{\pi}{a}F\ri)\frac{r'}{r}
 -\lf(\frac{r'}{r}\ri)^3\frac{\pi}{a}\textrm{ctg}\lf(\frac{\pi}{a}F\ri)=0
\end{equation}
for  $0>F>-a$. Let $x=(\ln r)'(F)$, from \eqref{hdeqF2} we can get  the following equation.
$$x'+\frac{\pi}{a}\textrm{ctg}\lf(\frac{\pi}{a}F\ri)\cdot x-\frac{\pi}{a}\textrm{ctg}\lf(\frac{\pi}{a}F\ri)\cdot x^3=0.$$
One can solve this Bernoulli equation to obtain
$$x^{-2}=1+c_0\lf(\sin\lf(\frac{\pi}{a}F\ri)\ri)^2$$
here $c_0$ is a constant depending on the choice of the function $f$. So
$$x=\frac{1}{\sqrt{1+c_0\lf(\sin\lf(\frac{\pi}{a}F\ri)\ri)^2}}.$$
Since $x=(\ln r)'(F)$, we can get
\begin{equation}\label{hdeqF3}
(\ln r)(F)=\int_{0}^{F}x(t)dt
=\int_{0}^{F}\frac{1}{\sqrt{1+c_0\lf(\sin\lf(\frac{\pi}{a}t\ri)\ri)^2}}dt.
\end{equation}
Noting that $x(F)$ is continuous in $(-a,0)$ and is equal to $1$ as $F=-a$, or $0,$ one can get $x$ is uniformly bounded for $F\in [-a,0].$ So the right hand side of \eqref{hdeqF3} is uniformly bounded, but the left hand side will tend to $-\infty$ as $F\to -a.$ Hence we get a contradiction. Therefore such $f$ does not exist, Theorem \ref{thm1} has been proved.
\end{proof}

We are going to prove Theorem \ref{thm2}.
\begin{proof}[Proof of Theorem \ref{thm2}]
First of all, let us denote $(r,\theta)$ as the polar coordinates of $P(a)$, and $u$ as the complex coordinates of $\mathbb{D}^*$ in $\mathbb{C}$, then the hyperbolic metric $\sigma_2 d|u|$ on $\mathbb{D}^*$ can be written as
\begin{equation} \label{thm2-1}
\frac{|du|}{|u|\ln\frac{1}{|u|}}
\end{equation}
here $|u|$ is the norm with respect to the Euclidean metric.

We will prove this theorem by contradiction. The idea is similar to the proof of Theorem \ref{thm1}. Suppose $\psi$ is a  rotationally symmetric harmonic diffeomorphism from $P(a)$ onto $\mathbb{D}^*$ with the metric $\sigma_2 d|u|$, with the form $\psi=g(r)e^{i\theta},$  then  substituting $\psi,\ \sigma_2$ to $u,\ \sigma$ in \eqref{eqdef} respectively,  we can get
\begin{equation}\label{thm2-2}
g''+\frac{g'}{r}-\frac{g}{r^2}-\frac{1+\ln g}{g\ln g}\lf((g')^2-\frac{g^2}{r^2}\ri)=0
\end{equation}
 for  $1>r>e^{-a}$. Since $v$ is a  harmonic diffeomorphism from $P(a)$ onto $\mathbb{D}^*$, we have
\begin{equation}\label{thm2-3}
 g(e^{-a})=0,\ g(1)=1 \textrm{ and } g'(r)>0 \textrm{ for  } 1>r>e^{-a},
\end{equation}
or
$$g(e^{-a})=1,\ g(1)=0 \textrm{ and } g'(r)<0 \textrm{ for  } 1>r>e^{-a}.$$

We will only deal with the case that \eqref{thm2-3} is satisfied, the rest case is similar.
Let $G=\ln g$, then the equation \eqref{thm2-2} can be rewritten as
\begin{equation}\label{thm2-4}
G''+\frac{1}{r}G'-\frac{1}{G}(G')^2+\frac{1}{r^2G}=0
\end{equation}
for  $1>r>e^{-a}$, with $G(1)=0$ and $\lim_{r\to e^{-a}}G(r)=-\infty.$

Regarding $r$ as a function of $G$, using a similar formula of \eqref{hdeqFp}, from \eqref{thm2-4} we can get
\begin{equation}\label{thm2-5}
\frac{r''}{r}-\lf(\frac{r'}{r}\ri)^2+\frac{r'}{rG}-\frac{1}{G}\lf(\frac{r'}{r}\ri)^3=0,\ G\in(-\infty,0).
\end{equation}
Similar to solving \eqref{hdeqF2}, we can get the solution to \eqref{thm2-5} as follows.
\begin{equation}\label{thm2-6}
(\ln r)'(G)=\frac{1}{\sqrt{1+c_1G^2}},\ G\in(-\infty,0)
\end{equation}
for some nonnegative constant $c_1$ depending on the choice of $g$.\\
 If $c_1$ is equal to $0$, then $g=r$, this is  in contradiction to  \eqref{thm2-3}.\\
  If $c_1$ is positive, then taking integration on both sides of \eqref{thm2-6}, we can get
\begin{equation}\label{thm2-7}
\begin{split}
  (\ln r)(G)&=\int_{0}^{G}\frac{1}{\sqrt{1+c_1t^2}}dt\\
         &=\frac{1}{\sqrt{c_1}}\ln\lf(\sqrt{c_1}G+\sqrt{1+c_1G^2}\ri).
   \end{split}
\end{equation}
So
$$r=\lf(\sqrt{c_1}\ln g+\sqrt{1+c_1\ln^2g}\ri)^{\frac{1}{\sqrt{c_1}}},$$
with $\lim_{g\to 0+}r(g)=0.$ On the other hand, from  \eqref{thm2-3}, we have $r(0)=e^{-a}.$  Hence we get a contradiction. Therefore such $g$ does not exist, Theorem \ref{thm2} has been proved.
\end{proof}

\section{Harmonic maps from $\mathbb{D}^*$ to $P(a)$ with its Euclidean metric and vice versa}
Now let us consider the case of that the target has the Euclidean metric.
\begin{proof}[Proof of Theorem \ref{thm3}]
Let us prove the first part of this theorem, that is,  show the nonexistence of rotationally symmetric harmonic diffeomorphism from $\mathbb{D}^*$ onto $P(a)$ with its Euclidean metric. The idea is similar to the proof of Theorem \ref{thm1}, so we just sketch the proof here. Suppose there is such a harmonic diffeomorphism $\varphi$ from $\mathbb{D}^*$ onto $P(a)$ with its Euclidean metric with the form $\varphi=h(r)e^{i\theta}$, then we can get
\begin{equation}\label{thm3-1}
h''+\frac{1}{r}h'-\frac{1}{r^2}h=0 \textrm{ for  } 1>r>0
\end{equation}
with
\begin{equation}\label{thm3-2}
h(0)=e^{-a},\ h(1)=1 \textrm{ and } h'(r)>0 \textrm{ for  } 1>r>0,
\end{equation}
or
$$h(0)=1,\ h(1)=e^{-a} \textrm{ and } h'(r)<0 \textrm{ for  } 1>r>0.$$

We will just deal with the case that \eqref{thm3-2} is satisfied, the rest case is similar.
Let $H=(\ln h)'>0\textrm{ for  } 1>r>0$, then we can get
\begin{equation}\label{thm3-3}
H'+H^2+\frac{1}{r}H=\frac{1}{r^2} \textrm{ for  } 1>r>0.
\end{equation}
Solving this equation, we can get
\begin{equation}\label{thm3-4}
H=\frac{1}{r}-\frac{2}{c_3r^3+r} =-\frac{1}{r}+\frac{2c_3r}{1+c_3r^2}
\end{equation}
here $c_3$  is a constant depending on the choice of $h$ such that $H>0\textrm{ for  } 1>r>0$ which implies
\begin{equation}\label{thm3-5j}
1+c_3r^2>2 \textrm{ or } 1+c_3r^2<0 \textrm{ for } 1>r>0.
\end{equation}
So
\begin{equation}\label{thm3-5}
\ln h=-\ln r+\ln|1+c_3r^2|+c_4
\end{equation}
here $c_4$ is a constant depending on the choice of $h$. Hence
\begin{equation}\label{thm3-6}
h=|1+c_3r^2|r^{-1}e^{c_4}.
\end{equation}
From \eqref{thm3-5j} and \eqref{thm3-6}, we can get $\lim_{r\to 0}h(r)=\infty$. On the other hand, from \eqref{thm3-2},
$h(0)=e^{-a}$. We get a contradiction. Hence such a function $h$ does not exist, the first part of Theorem \ref{thm3} holds.

Now let us prove the second part of this theorem, that is, show the existence of rotationally symmetric harmonic diffeomorphisms from $P(a)$ onto $\mathbb{D}^*$ with its Euclidean metric. It suffices to find a map from $P(a)$ onto $\mathbb{D}^*$ with the form $q(r)e^{i\theta}$ such that
\begin{equation}\label{thm3-7}
q''+\frac{1}{r}q'-\frac{1}{r^2}q=0 \textrm{ for  } 1>r>e^{-a}
\end{equation}
with $q(e^{-a})=0,\ q(1)=1$ and $q'>0$ for $ 1>r>e^{-a}.$ Using the boundary condition and \eqref{thm3-6}, we can get
that
$$q=(e^{2a}r^2-1)r^{-1}(e^{2a}-1)^{-1}$$
is a solution to \eqref{thm3-7}.

Therefore we finished the proof  of Theorem \ref{thm3}.
\end{proof}

\section{Harmonic maps from $\mathbb{C}$ to the hyperbolic disc}
In this section, we will give another proof of the following result which has been proved in \cite{ta,ta2,rr,cl}.
\begin{prop}\label{p}
There is no rotationally symmetric harmonic diffeomorphism from $\mathbb{C}$ onto the hyperbolic disc.
\end{prop}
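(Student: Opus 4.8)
\emph{Proof proposal.} The plan is to reproduce the proof of Theorem~\ref{thm1} almost word for word, the hyperbolic disc metric playing the role of \eqref{eqmetric1}; only the very last step genuinely differs. First I would write the Poincar\'e metric on the hyperbolic disc $\mathbb{D}$ as $\dfrac{2|du|}{1-|u|^2}$, use polar coordinates $(r,\theta)$ on $\mathbb{C}$, and suppose, for contradiction, that a rotationally symmetric harmonic diffeomorphism exists. It has the form $u=f(r)e^{i\theta}$; for it to carry $\mathbb{C}$, origin included, onto all of $\mathbb{D}$ one must have $f(0)=0$, $\lim_{r\to\infty}f(r)=1$ and $f'(r)>0$ on $(0,\infty)$ (the reversed boundary behaviour would send $0\in\mathbb{C}$ into $\partial\mathbb{D}$, so it cannot occur; it would in any case be handled identically). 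Substituting into \eqref{eqdef} exactly as in the derivation of \eqref{hdeq1} gives
\begin{equation}\label{p-1}
f''+\frac{f'}{r}-\frac{f}{r^2}+\frac{2f}{1-f^2}\lf((f')^2-\frac{f^2}{r^2}\ri)=0,\qquad 0<r<\infty .
\end{equation}

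Next I would set $F=\ln f$, an increasing function of $r$ with $F(0^+)=-\infty$ and $\lim_{r\to\infty}F(r)=0$. Using $f=e^F$ together with the identity $\dfrac{1+f^2}{1-f^2}=-\coth F$, equation \eqref{p-1} becomes
\begin{equation}\label{p-2}
F''+\frac{1}{r}F'-\coth(F)\,(F')^2+\frac{1}{r^2}\coth(F)=0,\qquad 0<r<\infty ,
\end{equation}
which is exactly \eqref{hdeqF1} with $\frac{\pi}{a}\textrm{ctg}\big(\frac{\pi}{a}F\big)$ replaced by $\coth F$. Hence the same manipulations go through: regarding $r$ as a function of $F$ as in \eqref{hdeqF2} and setting $x=(\ln r)'(F)>0$ reduces \eqref{p-2} to the Bernoulli equation
$$x'+\coth(F)\,x-\coth(F)\,x^3=0,\qquad F\in(-\infty,0),$$
whose solution is $x^{-2}=1+c_0\sinh^2 F$, i.e. $x=\big(1+c_0\sinh^2 F\big)^{-1/2}$, for a constant $c_0$ depending on $f$.

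Finally I would extract the contradiction from the behaviour as $F\to-\infty$. Viewed as a function of $F$, $\ln r$ is strictly increasing (since $x>0$) and, because $F=\ln f$, it runs from $-\infty$ (as $F\to-\infty$, i.e. $r\to0^+$) to $+\infty$ (as $F\to0^-$, i.e. $r\to\infty$); thus $\int_{-\infty}^{0}x(F)\,dF=+\infty$. For $x$ to be real and positive on all of $(-\infty,0)$ one needs $c_0\geq0$, since if $c_0<0$ then $1+c_0\sinh^2 F$ becomes negative for $|F|$ large. If $c_0>0$, then $x(F)$ is bounded near $F=0$ and satisfies $x(F)=O(e^{F})$ as $F\to-\infty$, so $\int_{-\infty}^{0}x(F)\,dF<\infty$ --- a contradiction. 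If $c_0=0$, then $x\equiv1$, so $\ln r=F+C$ and $f(r)=e^{-C}r$, contradicting $f(r)<1$ for $r$ large. Either way we reach a contradiction, proving Proposition~\ref{p}.

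The passage down to the Bernoulli equation is a line-by-line transcription of the proof of Theorem~\ref{thm1} (with $\sin(\frac{\pi}{a}F)$ replaced by $\sinh F$), so the only step needing real care is the last one. In Theorem~\ref{thm1} the parameter $F$ ranged over the bounded interval $(-a,0)$ and the integrand in \eqref{hdeqF3} was merely continuous and bounded, which already yielded the contradiction; here $F$ runs over the whole half-line $(-\infty,0)$, and the contradiction instead rests on the exponential decay $1/\sinh F=O(e^{F})$ as $F\to-\infty$, which forces the relevant integral to converge when $c_0>0$. I would also verify at the outset that the boundary data must be $f(0)=0$ and $\lim_{r\to\infty}f(r)=1$, so that $F=\ln f$ sweeps out exactly the interval $(-\infty,0)$ used above.
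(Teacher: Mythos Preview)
Your proof is correct and follows the same strategy as the paper: reduce the harmonic-map ODE to a Bernoulli equation for $(\ln r)'$, solve it explicitly, and extract a contradiction from the boundary behaviour. The only cosmetic difference is that the paper skips the substitution $F=\ln f$ and works with $k=f$ directly as the independent variable, obtaining $v^{-2}=k^2+c_5(1-k^2)^2$ in place of your $x^{-2}=1+c_0\sinh^2 F$ (the two formulas are equivalent via $x=kv$, $c_0=4c_5$).
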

\begin{proof}
It well-known that the hyperbolic metric on the unit disc is $\frac{2}{1-|z|^2}|dz|$. We will also use the idea of the proof of Theorem \ref{thm1}. Suppose there is such a harmonic diffeomorphism $\phi$ from $\mathbb{C}$ onto $\mathbb{D}$ with its hyperbolic metric with the form $\phi=k(r)e^{i\theta}$, then  we can get
\begin{equation}\label{prop-1}
k''+\frac{1}{r}k'-\frac{1}{r^2}k+\frac{2k}{1-k^2}\lf[(k')^2-\frac{k^2}{r^2}\ri]=0 \textrm{ for  } r>0
\end{equation}
with
\begin{equation}\label{prop-2}
k(0)=0 \textrm{ and } k'(r)>0 \textrm{ for  } r>0.
\end{equation}
Regarding $r$ as a function of $k$, setting $v=(\ln r)'(k)$, the equation \eqref{prop-1} can be rewritten as
\begin{equation}\label{prop-3}
(1-k^2)v'-2kv+v^3(k+k^3)=0.
\end{equation}
That is,
\begin{equation}\label{prop-4}
(v^{-2})'+\frac{4k}{1-k^2}v^{-2}=\frac{2(k+k^3)}{1-k^2}.
\end{equation}
One can solve this equation to obtain
$$v^{-2}=k^2+c_5(1-k^2)^2$$
for some nonnegative constant $c_5$ depending on the choice of the function $k$.\\
If $c_5=0$, then we can get $r=c_6k$ for some constant $c_6$. On the other hand, $\phi$ is a diffeomorphism, so $k\to 1$ as $r\to \infty.$  This is a contradiction.\\
If $c_5>0$, then $b_1\geq k^2+c_5(1-k^2)^2\geq b_2$ for some positive constants $b_1$ and $b_2$. So
$$|(\ln r)'(k)|\leq \frac{1}{\sqrt{b_2}}.$$
This is in contradiction to the assumption that $r\to \infty$ as $k\to 1$.

Therefore Proposition  \ref{p} holds.
\end{proof}

\end{document}